\newtheorem{theorem}{Theorem}[section]
\newtheorem{lemma}[theorem]{Lemma}
\newtheorem{corollary}[theorem]{Corollary}
\newtheorem{con}[theorem]{Conjecture}
\begin{document}

\title{Rational approximants for the Euler-Gompertz constant\footnote{This paper is prepared under the supervision of A. Skopenkov and is submitted to the Moscow Mathematical Conference for High-School Students. Readers are invited to send their remarks and reports on this paper to mmks@mccme.ru}}

\author{Vassily Bolbachan}
\date{\today}
\maketitle

\begin{abstract}
    We obtain two sequences of rational numbers which converge to the Euler-Gompertz
constant.
Denote by $\left<f(x)\right>$ the integral of $f(x)e^{-x}$ from 0 to infinity.
Recall that the Euler-Gompertz constant $\delta$ is $\left<\ln(x+1)\right>$.

{\bf Main idea.} Let $P_n(x)$ be a polynomial with integer coefficients. It is
easy to prove that $\left<P_n(x)\ln(x+1)\right>=a_n+\left<\ln(x+1)\right>b_n$ for some integers $a_n$, $b_n$. Hence if
$\left<P_n(x)\ln(x+1)\right>/b_n$ converges to zero, $a_n/b_n$ converges to
$-\delta$.

{\bf Main Theorem.} {\it Let u be positive real. There exists polynomials $P_n(x)$ (they are explicitly given in
the paper) such that $\left<P_n(x)\ln(xu+1)\right>$ tends to u as n tends to infinity.}

Proof of Main Theorem is elementary.

\end{abstract}

\section{Main result}

\begin{theorem}

\label{Main_Theorem}

For each real $u \ge 0$

$$u=\sum\limits_{m=r}^{\infty}{\sum\limits_{k=r}^{m} {m\choose k} {k\choose r} \dfrac{(-1)^{k+r}}{k!}\int_0^{\infty}{x^{k-1}e^{-x}\ln(xu+1)dx}}.$$

\end{theorem}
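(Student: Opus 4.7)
The plan is to let $T_N$ denote the partial sum obtained by truncating the outer series at $m=N$, and to prove $T_N\to u$ as $N\to\infty$. Writing the inner polynomial
$$Q_m(x):=\sum_{k=r}^{m} \binom{m}{k}\binom{k}{r}\frac{(-1)^{k+r}}{k!}\,x^{k-1},$$
the pivotal identity to establish first is
$$Q_m(x)\,e^{-x}=\frac{1}{r!\,(m-r)!\,x}\cdot\frac{d^{m-r}}{dx^{m-r}}\bigl(x^m e^{-x}\bigr).$$
I would verify this by expanding the right-hand derivative via Leibniz's rule, substituting $k=m-j$, and matching coefficients term by term against $Q_m$; this is essentially the Rodrigues formula for the associated Laguerre polynomials, written without naming them.

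Next, I would integrate $\int_0^\infty Q_m(x) e^{-x}\ln(xu+1)\,dx$ by parts $m-r$ times, transferring all the derivatives from $x^m e^{-x}$ onto $\ln(xu+1)/x$. All boundary terms vanish: $(x^m e^{-x})^{(j)}$ is divisible by $x^{m-j}$ and so vanishes at $0$ for $j<m$, and decays exponentially at $\infty$, while $\ln(xu+1)/x$ and its derivatives are smooth at $0$ and bounded at $\infty$, as witnessed by the representation $\ln(xu+1)/x=\int_0^u dt/(xt+1)$. Differentiating this same representation under the integral sign yields $\frac{d^{m-r}}{dx^{m-r}}\frac{\ln(xu+1)}{x}=(-1)^{m-r}(m-r)!\int_0^u t^{m-r}(xt+1)^{-(m-r+1)}\,dt$, so after the signs cancel,
$$\int_0^\infty Q_m(x) e^{-x}\ln(xu+1)\,dx=\frac{1}{r!}\int_0^u t^{m-r}\int_0^\infty\frac{x^m e^{-x}}{(xt+1)^{m-r+1}}\,dx\,dt.$$

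Summing this over $m=r,\ldots,N$ and swapping the finite sum with the two integrals, the summand collapses as a geometric progression:
$$\sum_{m=r}^N \frac{t^{m-r}x^m}{(xt+1)^{m-r+1}}=\frac{x^r}{xt+1}\sum_{j=0}^{N-r}\left(\frac{xt}{xt+1}\right)^{j}=x^r\left[1-\left(\frac{xt}{xt+1}\right)^{N-r+1}\right].$$
Hence $T_N=\frac{1}{r!}\int_0^u\int_0^\infty e^{-x}\,x^r\bigl[1-(xt/(xt+1))^{N-r+1}\bigr]\,dx\,dt$. Since $0\le xt/(xt+1)<1$ for all $x,t>0$, and the integrand is dominated by $x^r e^{-x}$ (integrable on $(0,\infty)\times(0,u)$), dominated convergence sends the error term to zero and leaves $T_N\to \frac{1}{r!}\int_0^u\int_0^\infty x^r e^{-x}\,dx\,dt=u$.

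The hardest part will be the Rodrigues-type identity of the first step: matching $\binom{m}{k}\binom{k}{r}/k!$ against the Leibniz expansion of $\frac{d^{m-r}}{dx^{m-r}}(x^m e^{-x})$ requires careful combinatorial bookkeeping, especially of the signs. Once that identity is in place, the remaining steps (integration by parts, summing a geometric series, dominated convergence) are routine.
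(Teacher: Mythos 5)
Your proof is correct, and it takes a genuinely different route from the paper. The paper works with the family $f_q(u)=\binom{q}{r}\frac{1}{\Gamma(q+1)}\int_0^\infty x^{q-1}e^{-x}\ln(xu+1)\,dx$ for a real parameter $q$, proves that the finite difference $\sum_{j=0}^m\binom{m}{j}(-1)^jf_{\varepsilon+j}(u)$ tends to $0$ uniformly in $\varepsilon\in(-1,-1/2)$ (via an inductive identity expressing $f_{\varepsilon+j}$ through derivatives of $f_\varepsilon$, a Gauss ${}_2F_1(1)$ evaluation, and Taylor's theorem with Cauchy remainder plus a derivative-decay lemma), then lets $\varepsilon\to-1$, interchanges the two limits using the uniformity, and telescopes $S_m-S_{m-1}$ to get the stated series. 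You instead recognize the inner sum as a Rodrigues-type expression: your identity $Q_m(x)e^{-x}=\frac{1}{r!\,(m-r)!\,x}\frac{d^{m-r}}{dx^{m-r}}(x^me^{-x})$ does check out, since after Leibniz expansion and the substitution $k=m-j$ both sides have coefficient $\frac{m!}{r!\,k!\,(k-r)!\,(m-k)!}$ on $x^{k-1}e^{-x}(-1)^{k+r}$, so the part you flagged as hardest is a routine verification. From there the integration by parts (with the boundary terms correctly killed by the divisibility of $(x^me^{-x})^{(j)}$ by $x^{m-j}$ and the boundedness of the derivatives of $\int_0^u\frac{dt}{xt+1}$), the geometric summation of the partial sums, and dominated convergence are all sound. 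Your argument buys something the paper's does not: an exact closed form for the $N$-th partial sum, $T_N=u-\frac{1}{r!}\int_0^u\int_0^\infty e^{-x}x^r\left(\frac{xt}{xt+1}\right)^{N-r+1}dx\,dt$, which makes the convergence transparent and even quantifiable, while avoiding the non-integer parameter $\varepsilon$, the hypergeometric machinery, and the delicate double-limit interchange; the paper's approach, on the other hand, is built around the operator calculus on $f_q$ that the author also uses to motivate the conjecture on $\psi(u)$.
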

We recall that
$$\delta=\int\limits_0^{\infty}\ln(x+1)e^{-x}dx.$$
\begin{corollary}
\label{Corollary1}

Let $r\geqslant 0$ be integer. We define two sequences of integer numbers $a_m$ and $b_m$ by formulas

$$a_m=\sum_{k=r}^m{m\choose k}^2{k\choose r}(m-k)!\sum_{w=0}^{k-1}(-1)^ww!\qquad b_m=\sum_{k=r}^m{m\choose k}^2{k\choose r}(m-k)!.$$

Then $\lim\limits_{m\to\infty}\dfrac{a_m}{b_m}=-\delta.$

\end{corollary}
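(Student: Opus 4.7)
The strategy is to specialize the Main Theorem to $u=1$, obtaining a convergent series $\sum_{m\ge r}T_m=1$ whose terms
\[T_m \;=\; \sum_{k=r}^m\binom{m}{k}\binom{k}{r}\frac{(-1)^{k+r}}{k!}I_k, \qquad I_k := \int_0^\infty x^{k-1}e^{-x}\ln(x+1)\,dx,\]
necessarily tend to $0$, and then to match the $\delta$- and rational-parts of a suitable rearrangement with $b_m$ and $-a_m$.

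First I would compute each $I_k$ in the form $\alpha_k+\beta_k\delta$ with $\alpha_k,\beta_k\in\mathbb Q$. Integration by parts with $u=\ln(x+1)$ and the antiderivative $v=-e^{-x}\sum_{j=0}^{k-1}\frac{(k-1)!}{j!}x^j$ of $x^{k-1}e^{-x}$ yields $I_k=\sum_{j=0}^{k-1}\frac{(k-1)!}{j!}c_j$, where $c_j:=\int_0^\infty \frac{x^je^{-x}}{x+1}\,dx$. The elementary identity $\frac{x^j}{x+1}=x^{j-1}-\frac{x^{j-1}}{x+1}$ produces the recurrence $c_j+c_{j-1}=(j-1)!$ with $c_0=\delta$, whose solution is $c_j=(-1)^j(\delta-s_j)$, where $s_j:=\sum_{w=0}^{j-1}(-1)^w w!$ is exactly the partial sum appearing in $a_m$.

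Using $\binom{m}{k}^2\binom{k}{r}(m-k)!=\binom{m}{k}\binom{k}{r}m!/k!$, the formulas from the Corollary become $b_m=m!\sum_k\binom{m}{k}\binom{k}{r}/k!$ and $a_m=m!\sum_k\binom{m}{k}\binom{k}{r}s_k/k!$ (up to the sign absorbed in the definition of $s_k$). Combined with $(-1)^k c_k=\delta-s_k$ this yields the clean identity
\[b_m\delta-a_m \;=\; m!\sum_{k=r}^m\binom{m}{k}\binom{k}{r}\frac{(-1)^k c_k}{k!} \;=\; m!\int_0^\infty\frac{e^{-x}}{x+1}\sum_{k=r}^m\binom{m}{k}\binom{k}{r}\frac{(-x)^k}{k!}\,dx. \qquad (\star)\]
Since $b_m$ is a sum of positive terms with $b_m\ge \binom{m}{r}^2(m-r)!\to\infty$, it is enough to prove $(\star)=o(b_m)$. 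The natural route is an Abel summation built on the second integration-by-parts identity $I_{k+1}=kI_k+c_k$: writing $C_k:=\binom{m}{k}\binom{k}{r}(-1)^{k+r}/k!$ and using the Pascal-type identity $\binom{m}{k-1}\binom{k-1}{r}+\binom{m}{k}\binom{k}{r}=\binom{m}{r}\binom{m-r+1}{k-r}$ to compute $C_{k-1}-kC_k=-\frac{k(m-r+1)}{m-k+1}C_k$, one expresses $\sum_{k=r}^m C_k c_k$ as $C_m I_{m+1}-rC_r I_r-(m-r+1)\sum_{k>r}\frac{kC_k I_k}{m-k+1}$. The first two terms are polynomial in $m$ (using the crude bound $I_k\le k!$) and thus $o(b_m)$; combined with $T_m=\sum_k C_k I_k\to0$, this should force $(\star)=o(b_m)$, giving $a_m/b_m\to-\delta$.

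\textbf{Main obstacle.} The hardest step is bounding the weighted sum $\sum_{k>r}kC_k I_k/(m-k+1)$ that emerges from the Abel summation: a naive absolute-value estimate gives $O(2^{m-r})$, which is not yet $o(b_m/m!)$. One must instead exploit the alternating-sign (Laguerre-like) cancellation built into $Q_m(x):=\sum_k\binom{m}{k}\binom{k}{r}(-x)^k/k!$, essentially refining the argument that gives $T_m\to0$, to close the estimate; the sign convention in the definition of $a_m$ and the boundary case $r=0$ (where $I_0$ is defined by a convergent improper integral rather than the closed form above) are minor technicalities to handle along the way.
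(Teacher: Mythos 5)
Your reduction to the identity $(\star)$, namely $\delta b_m-a_m=m!\sum_{k=r}^m\binom{m}{k}\binom{k}{r}\frac{(-1)^k c_k}{k!}$, is correct, as is the computation $c_j=(-1)^j(\delta-s_j)$ and the rewriting of $a_m,b_m$. But the argument stops exactly where it becomes hard, and you say so yourself: after the Abel summation built on $I_{k+1}=kI_k+c_k$ you are left with the weighted sum $\sum_{k>r}kC_kI_k/(m-k+1)$, for which you only have a bound of order $2^{m-r}$, and the promised ``alternating-sign cancellation'' that would improve this to $o(b_m/m!)$ is never carried out. Since showing that $\sum_k C_kc_k$ is small is the entire content of the corollary (everything else is bookkeeping), this is a genuine gap at the crucial step: knowing $T_m=\sum_kC_kI_k\to0$ does not, by the arguments you give, yield any usable bound on $\sum_kC_kc_k$.

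The paper closes this gap by a different and much simpler device: instead of specializing Theorem~\ref{Main_Theorem} at $u=1$ and then trying to trade the integrals $I_k$ for the $c_k$, it first differentiates the identity of Theorem~\ref{Main_Theorem} with respect to $u$ (termwise differentiation, justified by the uniform convergence for $u\in[0,1]$ coming from the uniformity statements in Lemma~\ref{Main_Lemma}) and only then sets $u=1$. Differentiating $\ln(xu+1)$ produces $x/(xu+1)$, so one obtains directly the convergent series $\sum_{m\ge r}\sum_{k=r}^m\binom{m}{k}\binom{k}{r}\frac{(-1)^{k+r}}{k!}c_k=1$; its $m$-th term therefore tends to $0$, and by Lemma~\ref{lemma_about_integral} that term equals $\frac{(-1)^r}{m!}(\delta b_m-a_m)$ up to the sign of the inner sum, so with $b_m/m!\to\infty$ the corollary follows with no summation by parts at all. (Your $u=1$ route through the $\ln$-integrals is essentially the one the paper uses for Corollary~\ref{Corollary2}, where the bookkeeping is organized differently.) One further point: from $(\star)$ together with $(\star)=o(b_m)$ you would get $a_m/b_m\to+\delta$, not $-\delta$, so your final sentence does not follow as written. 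In fairness, the paper itself has the same sign tension --- its concluding display carries the inner sum $\sum_j(-1)^{j+1}j!$, the negative of the $\sum_w(-1)^ww!$ appearing in the stated $a_m$, and numerically $a_m/b_m\to+\delta$ with $a_m$ as stated --- but you should flag the discrepancy rather than assert $-\delta$ from an identity that gives $+\delta$.
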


\begin{corollary}
\label{Corollary2}

Let $r\geqslant 1$ be integer. We define two sequences of integer numbers $a_m$ and $b_m$ by formulas

$$a_m=m!\sum_{k=r}^{m}\sum\limits_{j=0}^{k-1}\sum_{i=0}^{j-1}{m\choose k}{k\choose r}\dfrac{i!}{kj!}(-1)^{k+j+i+1}$$

$$b_m=m!\sum_{k=r}^{m}\sum\limits_{j=0}^{k-1}{m\choose k}{k\choose r}\dfrac{(-1)^{k+j}}{kj!}.$$

Then $\lim\limits_{m\to\infty}\dfrac{a_m}{b_m}=-\delta.$

\end{corollary}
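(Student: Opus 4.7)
The plan is to apply Theorem~\ref{Main_Theorem} at $u=1$, evaluate the inner integral in closed form as a rational number plus a rational multiple of $\delta$, and then match the resulting expression against the formulas for $a_m$ and $b_m$.

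Setting $u=1$ in Theorem~\ref{Main_Theorem} gives
$$1=\sum_{m=r}^{\infty}T_m,\qquad T_m:=\sum_{k=r}^{m}\binom{m}{k}\binom{k}{r}\frac{(-1)^{k+r}}{k!}I_k,\qquad I_k:=\int_0^{\infty}x^{k-1}e^{-x}\ln(x+1)\,dx,$$
so in particular $T_m\to 0$ as $m\to\infty$. Next I would compute $I_k$ explicitly. Since an antiderivative of $x^{k-1}e^{-x}$ is $-(k-1)!\,e^{-x}\sum_{j=0}^{k-1}x^j/j!$, integration by parts (with vanishing boundary terms) gives
$$I_k=(k-1)!\sum_{j=0}^{k-1}\frac{1}{j!}\int_0^{\infty}\frac{x^j e^{-x}}{x+1}\,dx.$$
Using the polynomial identity $x^j/(x+1)=\sum_{i=0}^{j-1}(-1)^{j-1-i}x^i+(-1)^j/(x+1)$ together with $\int_0^\infty x^i e^{-x}\,dx=i!$ and (by one further integration by parts) $\int_0^\infty e^{-x}/(x+1)\,dx=\delta$, I arrive at
$$I_k=(k-1)!\sum_{j=0}^{k-1}\frac{1}{j!}\left[\sum_{i=0}^{j-1}(-1)^{j-1-i}i!+(-1)^j\delta\right].$$

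Substituting this back into $T_m$, multiplying by $m!$ (the prefactor $(k-1)!/k!$ collapses to $1/k$), and separating the $\delta$-free and $\delta$-proportional parts while using $(-1)^{j-1-i}=(-1)^{j+i+1}$, the bookkeeping collapses to the key identity
$$m!\,T_m=(-1)^r\bigl(a_m+b_m\delta\bigr),$$
with $a_m,b_m$ exactly the quantities defined in the statement. Integrality of $a_m,b_m$ for $r\ge 1$ is a separate denominator-clearing check based on $\binom{k}{r}/k=\binom{k-1}{r-1}/r$ and on the fact that $m!/j!$ is an integer for $j\le m$.

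Finally, to conclude $a_m/b_m\to-\delta$, I rewrite
$$\frac{a_m}{b_m}+\delta=\frac{a_m+b_m\delta}{b_m}=(-1)^r\,\frac{m!\,T_m}{b_m}.$$
The \emph{main obstacle} lies in this last step: bare convergence of $\sum T_m=1$ only yields $T_m\to 0$, which is much weaker than the decay $m!\,T_m/b_m\to 0$ required. I therefore expect one must combine a quantitative upper bound on $|T_m|$ (presumably distilled from the proof of Theorem~\ref{Main_Theorem}) with a lower bound on $|b_m|$ obtained by recognizing the inner sum $\sum_{j=0}^{k-1}(-1)^j/j!$ as a partial sum of $e^{-1}$, so that $b_m/m!$ has a nontrivial main term. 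Together these bounds should close the gap and give the stated convergence.
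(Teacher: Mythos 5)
Your reduction to the identity $m!\,T_m=(-1)^r(a_m+b_m\delta)$ is correct and coincides with the paper's first step (the paper simply cites Gradshteyn--Ryzhik for the value of $\int_0^\infty x^{k-1}e^{-x}\ln(x+1)\,dx$, which you rederive by parts; your integrality remark is also easily completed, since $m!\binom mk\binom kr\frac{1}{kj!}=\binom mk\binom kr\frac{m!}{k!}\cdot\frac{(k-1)!}{j!}$). But your argument stops exactly where the paper's real work begins: you never establish the needed lower bound on $|b_m|$, you only say you ``expect'' it can be combined with a quantitative bound on $|T_m|$. That missing step is the bulk of the paper's proof of this corollary: it shows that $A_m(r):=-r\,b_m/m!$ tends to $+\infty$, by evaluating $A_m(r)+A_m(r+1)$ in closed form via the identity $\sum_{k=j}^m\binom mk\binom kr(-1)^k=\binom mj\binom jr\frac{j-r}{m-r}(-1)^j$ (a consequence of Gauss's hypergeometric evaluation), deducing that $A_m(r+2)-A_m(r)\to+\infty$, and finishing with the base cases $A_m(0)=0$ and $A_m(1)=\sum_{j=1}^m\binom{m-1}{j-1}\frac1{(j-1)!}$. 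Without some such argument the conclusion does not follow, so as written the proposal has a genuine gap. Note also that the other half of your ``expected'' combination is unnecessary: once $|b_m|/m!$ is bounded away from $0$ (the paper in fact shows it tends to $\infty$), the bare fact $T_m\to0$ already gives $m!\,T_m/b_m\to0$; no quantitative upper bound on $|T_m|$ is required.

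Your specific suggestion for bounding $b_m$ is also misdirected as stated, although it can be repaired. Writing $s_k=\sum_{j=0}^{k-1}(-1)^j/j!=e^{-1}-t_k$ with $t_k=\sum_{j\ge k}(-1)^j/j!$, and using $\binom kr\frac1k=\frac1r\binom{k-1}{r-1}$, one has $r\,b_m/m!=e^{-1}\sum_{k=r}^m\binom mk\binom{k-1}{r-1}(-1)^k-\sum_{k=r}^m\binom mk\binom{k-1}{r-1}(-1)^k t_k$. The first sum equals $(-1)^r$, so the ``main term'' produced by replacing the partial sum with $e^{-1}$ is the bounded constant $(-1)^r e^{-1}$, not a growing quantity. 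The divergence of $b_m/m!$ comes entirely from the second sum: since $t_k$ has sign $(-1)^k$, its terms $\binom mk\binom{k-1}{r-1}(-1)^k t_k=\binom mk\binom{k-1}{r-1}|t_k|$ are all positive, and already the single term $k=r+1$ is of order $m^{r+1}$, so the second sum tends to $+\infty$ and hence $|b_m|/m!\to\infty$. Carried out this way (with the roles of main term and error reversed), your $e^{-1}$ idea does give a proof of the crucial lower bound, arguably simpler than the paper's recursion in $r$; but it must actually be carried out for the corollary to be proved.
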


\begin{con}For each real $u>0$

$$\psi(u)=\ln(u)+\lim\limits_{m\to\infty}\sum\limits_{k=1}^mA_{k,m+1}{m\choose k}\dfrac{(-1)^k}{k!m!}\int_0^{\infty}x^{k-1}e^{-x}\ln\left(\dfrac{x+u}{u}\right)$$

where

$$A_{k,m}=\sum\limits_{t=2}^m\left\{{m\atop t}\right\}\sum\limits_{w=1}^{t-1}(-k)^{t-w}\sum\limits_{j=1}^w(-1)^{j}B_j
\left[w \atop j\right].$$
\end{con}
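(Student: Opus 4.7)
The plan is to parallel the proof of the Main Theorem, but starting from a suitable integral representation of the digamma function $\psi$. Specifically, I would begin with Binet's identity
$$\psi(u)-\ln u=\int_0^{\infty}\left(\frac{1}{t}-\frac{1}{1-e^{-t}}\right)e^{-ut}\,dt,$$
and expand the bracketed weight as $-\tfrac12-\sum_{j\ge 2}\tfrac{B_j}{j!}t^{j-1}$. This is how the Bernoulli numbers $B_j$ in the definition of $A_{k,m}$ enter the picture: one is effectively replacing the weight $\ln(xu+1)$ of the Main Theorem by a weight that itself has a Bernoulli expansion, and the coefficients $A_{k,m}$ are the price paid for this replacement.

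The next step is to mimic the polynomial-approximation argument used to prove the Main Theorem. There, $\binom{m}{k}\binom{k}{r}(-1)^{k+r}/k!$ is essentially a Laguerre-type coefficient, and the integrals $\int_0^\infty x^{k-1}e^{-x}\ln(xu+1)\,dx$ against these weights telescope to $u$. Here I would insert the Bernoulli expansion term by term before telescoping: each monomial $t^{j-1}$ produces, after the change of variables $x=ut$, an expression whose integral against $x^{k-1}e^{-x}$ is a factorial divided by a power of $u$. Rewriting the resulting powers of $k$ in the falling-factorial basis introduces Stirling numbers of the second kind $\left\{{m\atop t}\right\}$, and re-expanding falling factorials back into ordinary powers (so that the Main Theorem's binomial identities can still be applied) introduces the unsigned Stirling numbers of the first kind $\left[w \atop j\right]$. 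Chaining these two changes of basis together with the Bernoulli generating function $\sum_{j\ge 0}B_j z^j/j!=z/(e^z-1)$ produces precisely the triple sum defining $A_{k,m}$.

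The hard part --- and the reason this statement is only a conjecture --- will be convergence. The asymptotic expansion $\psi(u)-\ln u\sim-\tfrac{1}{2u}-\sum_{n\ge 1}\tfrac{B_{2n}}{2n\,u^{2n}}$ diverges for every $u>0$, so the tail estimate that worked for the Main Theorem (where the target is the polynomial $u$) cannot be repeated directly. A successful argument would need a bound of the shape $|A_{k,m+1}|\ll m!\,C^m$, which in turn reduces to controlling the alternating inner sum $\sum_{j=1}^w(-1)^jB_j\left[w \atop j\right]$; obtaining a closed form, or at least a tractable generating function, for this sum seems to be the decisive technical step. Once it is available, the convergence analysis should parallel that of Corollary~\ref{Corollary1}, with the Laguerre-type coefficients replaced by the new Bernoulli-weighted ones, and the divergent Bernoulli tail re-summed as an Abel-type limit of genuine integrals.
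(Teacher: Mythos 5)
This statement is left as a conjecture in the paper: the author gives no proof of it, so there is no argument of the paper to which your text could be compared. More to the point, what you have written is a research plan rather than a proof, and the plan leaves both of its essential steps open. First, you never actually derive the coefficients $A_{k,m}$: the passage from Binet's formula through the Bernoulli expansion and the two Stirling changes of basis is narrated, not computed, so even the formal identity behind the conjectured formula is not established. This is not a harmless omission, because the expansion of $\psi(u)-\ln u$ in Bernoulli numbers that you propose to insert ``term by term'' is a divergent asymptotic series for every $u>0$ (as you yourself note); interchanging it with the integrals and with the telescoping sum is exactly the kind of step that requires an Euler--Maclaurin remainder term or an explicit Abel-type regularization, and none is supplied. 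Without that, one cannot even confirm that the triple sum $A_{k,m}=\sum_{t=2}^m\left\{{m\atop t}\right\}\sum_{w=1}^{t-1}(-k)^{t-w}\sum_{j=1}^w(-1)^{j}B_j\left[w\atop j\right]$ is the correct coefficient, let alone prove the limit formula.

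Second, the convergence of the outer limit as $m\to\infty$ is the entire content of the statement, and you explicitly defer it. You observe that a bound of the shape $|A_{k,m+1}|\le C^m\,m!$ ``would be needed,'' but you neither prove such a bound nor show that it would suffice; the tail estimates that drive the paper's Main Theorem (Lemma \ref{Main_Lemma} and Lemma \ref{Lemma_about_fluxion}) exploit the specific weight $\ln(xu+1)$ and the fact that the target is the polynomial $u$, and they do not transfer to the non-polynomial target $\psi(u)-\ln u$ with the $m$-dependent coefficients $A_{k,m+1}$. So the decisive analytic difficulty --- the very reason the author states this as a conjecture rather than a theorem --- remains untouched. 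Your outline is a reasonable heuristic for where the Bernoulli and Stirling numbers come from, but as it stands it does not constitute a proof of the statement.
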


Here $\psi(x)$ is the digamma function, $\left[w \atop j\right]$ is Stirling numbers of the first kind, $\left\{w \atop j\right\}$ is Stirling numbers of the second kind and $B_j$ is the Bernoulli numbers. Definitions can be found in \cite{Ro06}, \cite{Wa03}. See also \cite{GS06}, \cite{Bo10}.

\section{Proof of Theorem \ref{Main_Theorem}}

Let $u\ge 0$ be real and $r\geqslant 0$ be integer. For each real $q>-1$ by definition, put
\begin{equation}
\label{f_k}
f_q(u)={q\choose r}\dfrac 1{\Gamma(q+1)}\int_0^{\infty}x^{q-1}e^{-x}\ln(xu+1)dx
\end{equation}

where $\Gamma(q+1)$ is the Gamma function. (see for example \cite{Wa03}.)

In order to prove Theorem \ref{Main_Theorem} we need

\begin{lemma}
\label{Main_Lemma}
For each real $u\ge 0$ and $\varepsilon\in(-1;-1/2)$ we have
$$\lim\limits_{m\to\infty}\sum\limits_{j=0}^m{m\choose j}(-1)^jf_{\varepsilon+j}(u)=0.$$
For each real $u_0>0$ the limit converges uniformly for $u\in[0;u_0]$ and $\varepsilon\in(-1;-1/2)$.
\end{lemma}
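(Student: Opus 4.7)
The plan is to replace the complicated sum by an integral whose decay can be estimated directly. As a first step I would derive a tractable form of $f_q(u)$: applying Frullani's identity $\ln(1+A)=\int_0^\infty e^{-t}(1-e^{-At})\,dt/t$ with $A=xu$, interchanging the order of integration, using $\int_0^\infty x^{q-1}e^{-x(1+ut)}\,dx=\Gamma(q)(1+ut)^{-q}$, and noting that $\binom{q}{r}/q=(q-1)(q-2)\cdots(q-r+1)/r!=:P(q)/r!$ is a polynomial of degree $r-1$ in $q$ (when $r\ge 1$), one gets
\[
f_q(u)=\frac{P(q)}{r!}\int_0^\infty\frac{e^{-t}\bigl(1-(1+ut)^{-q}\bigr)}{t}\,dt.
\]

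Substituting $q=\varepsilon+j$ into $S_m:=\sum_{j=0}^m\binom{m}{j}(-1)^j f_{\varepsilon+j}(u)$ and exchanging the finite sum with the $t$-integral splits $S_m$ into two parts. In the ``$1$'' part the inner sum is $\sum_{j=0}^m\binom{m}{j}(-1)^jP(\varepsilon+j)$, the $m$-th forward difference of a polynomial of degree $r-1$ in $j$, which vanishes for $m\ge r$. In the other part I would write $P(\varepsilon+j)=\sum_{k=0}^{r-1}c_k(\varepsilon)\binom{j}{k}$ in the Newton basis and apply the identity $\sum_j\binom{m}{j}(-z)^j\binom{j}{k}=\binom{m}{k}(-z)^k(1-z)^{m-k}$ with $z=1/(1+ut)$; using $1-z=ut/(1+ut)$ and then substituting $s=ut$ collapses everything into
\[
S_m=-\frac{1}{r!}\sum_{k=0}^{r-1}(-1)^k c_k(\varepsilon)\binom{m}{k}\int_0^\infty\frac{s^{m-k-1}e^{-s/u}}{(1+s)^{m+\varepsilon}}\,ds.
\]

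It then remains to show that each integral $J_k(m,u)$ above goes to $0$ faster than the prefactor $\binom{m}{k}\sim m^k$ grows. The integrand $s^{m-k-1}/(1+s)^{m+\varepsilon}$ is sharply peaked near $s^{*}\sim m/(1-\varepsilon)$, where $e^{-s^{*}/u}$ is already exponentially small in $m$. I would split $[0,\infty)=[0,\tfrac12]\cup[\tfrac12,\sqrt{m}\,]\cup[\sqrt{m},\infty)$: on the first piece $s^{m-k-1}\le 2^{-(m-k-1)}$; on the middle piece $(s/(1+s))^{m-k-1}\le(1-1/(1+\sqrt{m}))^{m-k-1}=O(e^{-\sqrt{m}/2})$; on the tail the bound $(1+s)^{m+\varepsilon}\ge s^{m+\varepsilon}$ leaves the factor $e^{-s/u}\le e^{-\sqrt{m}/u_0}$ for $u\in(0,u_0]$. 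Each piece multiplied by $m^k$ tends to $0$ sub-exponentially in $\sqrt{m}$, and the boundedness of $c_k(\varepsilon)$ and of the constants arising from $(1+s)^{-\varepsilon}$ on $\varepsilon\in(-1,-1/2)$ gives uniformity.

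The main technical obstacle is precisely this uniform bookkeeping: making the integral estimate tight enough to absorb the $\binom{m}{k}$ prefactor while the constants remain controlled in $\varepsilon$ and $u$. A separate nuisance is the case $r=0$, where $P$ degenerates to $1/q$ and the polynomial-finite-difference trick fails; there one can replace it by the Beta-function identity $\sum_j\binom{m}{j}(-z)^j/(\varepsilon+j)=\int_0^1 x^{\varepsilon-1}(1-xz)^m\,dx$, arriving at
\[
S_m=\int_0^\infty\frac{e^{-t}}{t}\int_{1/(1+ut)}^1 y^{\varepsilon-1}(1-y)^m\,dy\,dt,
\]
which succumbs to the same kind of splitting argument thanks to the factor $(1-y)^m$ concentrated on an interval of length $ut/(1+ut)$.
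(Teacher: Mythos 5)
Your argument is correct in substance, but it takes a genuinely different route from the paper. The paper first proves, by induction on $j$, an identity expressing $f_{\varepsilon+j}(u)$ through the derivatives $f_{\varepsilon}^{(i)}(u)$, evaluates the resulting binomial sum by Gauss's theorem for $F(a,b,c;1)$, and then kills the surviving terms by Taylor's theorem with the Cauchy remainder together with a separate lemma bounding $u^m f_{\varepsilon}^{(m)}(u)/m!$ (that lemma is itself proved by splitting an integral at $\sqrt m$). You instead insert the Frullani representation of $\ln(1+xu)$, after which the Newton-basis/finite-difference step annihilates the polynomial part $\binom{q}{r}/q$ and collapses $S_m$ into explicit integrals of $s^{m-k-1}e^{-s/u}(1+s)^{-m-\varepsilon}$; this bypasses the induction, the hypergeometric evaluation and the Taylor remainder altogether, treats all $r\ge 1$ at once, and your splitting at $s=\sqrt m$ is essentially the same elementary estimate the paper uses for its derivative lemma. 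The trade-off is that your route leans on Fubini/Frullani manipulations and a separate treatment of $r=0$, while the paper's route is longer but stays inside one chain of identities for all $r\ge 0$.

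Three details need repair, all fixable. First, for the $j=0$ term you have $q=\varepsilon\in(-1,-1/2)$, so the step $\int_0^\infty x^{q-1}e^{-x(1+ut)}dx=\Gamma(q)(1+ut)^{-q}$ is not literally valid; you must use the convergent difference form $\int_0^\infty x^{q-1}\bigl(e^{-x}-e^{-x(1+ut)}\bigr)dx=\Gamma(q)\bigl(1-(1+ut)^{-q}\bigr)$, valid for $-1<q<0$ by analytic continuation (or integration by parts), which yields the same formula for $f_{\varepsilon}$. Second, in the case $r=0$ the identity you quote, $\sum_j\binom{m}{j}(-z)^j/(\varepsilon+j)=\int_0^1 x^{\varepsilon-1}(1-xz)^m dx$, is false as written: the right-hand side diverges at $0$ for $\varepsilon<0$, and this is not the sum that actually occurs. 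What you need is $\frac{1-z^{\varepsilon+j}}{\varepsilon+j}=\int_z^1 y^{\varepsilon+j-1}dy$ with $z=1/(1+ut)$, which produces exactly the double integral you display, so the endpoint of that paragraph is right even though the quoted lemma is not. Third, on the tail piece bound the integral rather than the integrand: $\int_{\sqrt m}^\infty e^{-s/u}ds=u\,e^{-\sqrt m/u}\le u_0\,e^{-\sqrt m/u_0}$ for $u\in(0,u_0]$. With these repairs, and noting that $c_k(\varepsilon)$ is bounded on $(-1,-1/2)$ and that $u=0$ is trivial since $f_q(0)=0$, your estimates are uniform in $u\in[0,u_0]$ and $\varepsilon\in(-1,-1/2)$ and the lemma follows.
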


Lemma \ref{Main_Lemma} will be proved below.

\begin{proof}[Proof of Theorem \ref{Main_Theorem}] The proof is in two steps.

\begin{description}
    \item[\it Step 1.] Let us prove that $\lim\limits_{\varepsilon\to-1}f_{\varepsilon}(u)=(-1)^ru$.

We have
$$(-1)^r\lim\limits_{\varepsilon\to -1}f_{\varepsilon}(u)=(-1)^r\lim\limits_{\varepsilon\to -1}{\varepsilon\choose r}\dfrac1{\Gamma(1+\varepsilon)}\int_0^{\infty}x^{\varepsilon-1}\ln(xu+1)e^{-x}dx\overset{(*)}=$$

$$\overset{(*)}=\lim\limits_{\varepsilon\to -1}\dfrac1{\Gamma(1+\varepsilon)}\int_0^{\infty} x^{\varepsilon-1}\ln(xu+1)e^{-x}dx\overset{(**)}=$$

$$\overset{(**)}=\lim\limits_{\varepsilon\to -1}\dfrac1{\Gamma(1+\varepsilon)}\int_0^{\infty} x^{\varepsilon-1}(xu)e^{-x}dx=u\lim\limits_{\varepsilon\to -1}\dfrac1{\Gamma(1+\varepsilon)}\Gamma(1+\varepsilon)=u.$$

The equality (*) follows because

$$\lim\limits_{\varepsilon\to -1}{\varepsilon \choose r}={-1 \choose r}=\dfrac{(-1)(-2)\dots(-r)}{r!}=(-1)^r.$$

The equality (**) follows because
$$\dfrac 1{\Gamma(1+\varepsilon)}=\dfrac{1+\varepsilon}{\Gamma(2+\varepsilon)}.$$
and
$$\left|\int\limits_0^{\infty}x^{\varepsilon-1}(\ln(xu+1)-xu)e^{-x}dx\right|\leq$$
$$\leq\left|\int\limits_0^{1}x^{-2}(\ln(xu+1)-xu)e^{-x}\right|+ \left|\int\limits_1^{\infty}x^{-3/2}(\ln(xu+1)-xu)e^{-x}\right|.$$

    \item[\it Step 2.] By Lemma \ref{Main_Lemma}, we get
$$0=\lim\limits_{m\to\infty}\sum\limits_{j=0}^m{m\choose j}(-1)^jf_{\varepsilon+j}(u)=f_{\varepsilon}(u)+\lim\limits_{m\to\infty}\sum\limits_{j=0}^{m-1}(-1)^{j+1}{m\choose j+1}f_{j+1+\varepsilon}(u).$$

In Step 1 we proved that $f_{\varepsilon}(u)$ tends to $(-1)^ru$ as $\varepsilon$ tends to $-1$. Hence

$$u=\lim\limits_{\varepsilon\to -1}\lim\limits_{m\to\infty}\sum\limits_{j=0}^{m-1}(-1)^{j+r}{m\choose j+1}f_{j+1+\varepsilon}(u).$$

Also by Lemma \ref{Main_Lemma} the convergence in this formula is uniform for $\varepsilon\in(-1;-1/2)$. Hence changing the order of limits and substituting $m+1$ by $m$ in this formula, we get
$$u=\lim\limits_{m\to\infty}\sum\limits_{j=0}^{m}(-1)^{j+r}{m+1\choose j+1}f_{j}(u)=$$

$$=\lim\limits_{m\to\infty}\sum\limits_{j=0}^{m}{m+1\choose j+1}{j\choose r}\dfrac {(-1)^{j+r}}{j!}\int_0^{\infty}x^{j-1}\ln(xu+1)e^{-x}dx.$$

Denote by $S_m$ the expression under the limit. We have

$$S_m-S_{m-1}=\sum\limits_{j=0}^{m}\left({m+1\choose j+1}-{m\choose j+1}\right){j\choose r}\dfrac{(-1)^{j+r}}{j!}\int_0^{\infty}x^{j-1}\ln(xu+1)e^{-x}dx=$$
$$=\sum\limits_{j=0}^{m}{m\choose j}{j\choose r}\dfrac {(-1)^{j+r}}{j!}\int_0^{\infty}x^{j-1}\ln(xu+1)e^{-x}dx.$$

Hence

$$u=\sum_{m=0}^{\infty}(S_m-S_{m-1})=\sum_{m=0}^{\infty}\sum\limits_{j=0}^{m}{m\choose j}{j\choose r}\dfrac{(-1)^{j+r}}{j!}\int_0^{\infty}x^{j-1}\ln(xu+1)e^{-x}dx.$$
\end{description}

\end{proof}

\begin{proof}[Proof of Lemma \ref{Main_Lemma}] Proof is in three steps.

\begin{description}
    \item[\it Step 1.] Let $j>0$ be integer and $\varepsilon\in (-1;-1/2)$ be real. We claim that

\begin{equation}
\label{Different_Equality}
f_{\varepsilon+j}(u)={{\varepsilon+j-r\choose j}}^{-1}\sum\limits_{i=0}^j{\varepsilon+j-1\choose j-i}\dfrac{u^i}{i!}f_{\varepsilon}^{(i)}(u).
\end{equation}

The proof is by induction over $j$. Let us prove the base of induction for $j=1$. We must prove that

\begin{equation}
\label{base}
f_{\varepsilon+1}(u)=\dfrac {\varepsilon}{\varepsilon+1-r}f_{\varepsilon}(u)+\dfrac 1{\varepsilon+1-r}uf_{\varepsilon}'(u).
\end{equation}

Integrating formula (\ref{f_k}) by part, we get

$$f_{\varepsilon}(u)=-{\varepsilon\choose r}\dfrac 1{\Gamma(\varepsilon+1)}\int\limits_0^{\infty}\dfrac {x^{\varepsilon}}{\varepsilon}\left(-e^{-x}\ln(xu+1)+u\dfrac{e^{-x}}{xu+1}\right)dx=$$

\begin{equation}
\label{posle_differicirovania}
=\dfrac{\varepsilon-r+1}{\varepsilon}f_{\varepsilon+1}(u)-\dfrac u{\varepsilon} {\varepsilon \choose r}\dfrac 1{\Gamma(\varepsilon+1)}\int\limits_0^{\infty}x^{\varepsilon}e^{-x}\dfrac {dx}{xu+1}.
\end{equation}


For each real $u_0>0$ integral in formula (\ref{f_k}) converges uniformly for $u\in[0;u_0]$. Hence
differentiating formula (\ref{f_k}) with respect to $u$, we get

$$f_{\varepsilon}'(u)={\varepsilon\choose r}\dfrac 1{\Gamma(\varepsilon+1)}\int_0^{\infty}x^{\varepsilon}e^{-x}\dfrac {dx}{xu+1}.$$

Combining this with formula (\ref{posle_differicirovania}), we obtain

$$f_{\varepsilon}(u)=\dfrac{\varepsilon-r+1}{\varepsilon}f_{\varepsilon+1}(u)-\dfrac u{\varepsilon} f_{\varepsilon}'(u).$$

The base of induction follows.

Let us prove the step of induction. By the inductive hypothesis for $j=N$, substituting $\varepsilon+1$ for
$\varepsilon$, we get

$$f_{\varepsilon+N+1}(u)={{\varepsilon+N+1-r\choose N}}^{-1}\sum\limits_{i=0}^N{\varepsilon+N\choose N-i}\dfrac{u^i}{i!}f_{\varepsilon+1}^{(i)}(u).$$

Substituting formula (\ref{base}) in this formula, we get

$$f_{\varepsilon+N+1}(u)=$$

$$={{\varepsilon+N+1-r\choose N}}^{-1}\sum\limits_{i=0}^N{\varepsilon+N\choose N-i}\dfrac{u^i}{i!}\dfrac{d^i}{du^i}\left(\dfrac {\varepsilon}{\varepsilon+1-r}f_{\varepsilon}(u)+\dfrac 1{\varepsilon+1-r}uf_{\varepsilon}'(u)\right).$$

Or equivalently

\begin{equation}
\label{shag_of_induction} f_{\varepsilon+N+1}(u)(N+1){\varepsilon+N+1-r\choose
N+1}=\sum\limits_{i=0}^N{\varepsilon+N\choose N-i}\dfrac{u^i}{i!}\dfrac{d^i}{du^i}\left(\varepsilon
f_{\varepsilon}(u)+uf_{\varepsilon}'(u)\right).
\end{equation}

If we substituting in the Leibniz formula

$$\dfrac{d^i}{du^i}\left(h(u)g(u)\right)=\sum\limits_{k=0}^i{i\choose k}f^{(k)}(u)g^{(i-k)}(u)$$

$u$ for $h(u)$ and $f'(u)$ for $g(u)$, we obtain

$$\dfrac{d^i}{du^i}(uf_{\varepsilon}'(u))=uf_{\varepsilon}^{(i+1)}(u)+if_{\varepsilon}^{(i)}(u).$$

Hence the right-hand side of formula (\ref{shag_of_induction}) can be rewritten as

$$\sum\limits_{i=0}^N{\varepsilon+N\choose N-i}\dfrac{u^i}{i!}\left(\varepsilon f_{\varepsilon}^{(i)}(u)+uf_{\varepsilon}^{(i+1)}(u)+if_{\varepsilon}^{(i)}(u)\right)=$$$$=\sum\limits_{i=0}^N{\varepsilon+N\choose N-i}\dfrac{u^i}{i!}(\varepsilon+i)f_{\varepsilon}^{(i)}(u)+\sum\limits_{i=0}^N{\varepsilon+N\choose N-i}\dfrac{u^i}{i!}uf_{\varepsilon}^{(i+1)}(u)=$$

$$=\sum\limits_{i=0}^{N+1}\dfrac{u^i}{i!}f_{\varepsilon}^{i}(u)\left({\varepsilon+N\choose N-i}(\varepsilon+i)+{\varepsilon+N\choose N-i+1}i\right).$$

From the formula

$${\varepsilon+N\choose N-i}(\varepsilon+i)+{\varepsilon+N\choose N-i+1}i=(N+1){\varepsilon+N\choose N-i+1}$$

it follows that

$$f_{\varepsilon+N+1}(u)(n+1){\varepsilon+N+1-r\choose N+1}=(n+1)\sum\limits_{i=0}^{N+1}\dfrac{u^i}{i!}f_{\varepsilon}^{i}(u){\varepsilon+N\choose N-i+1}.$$

Dividing both sides by $(n+1){\varepsilon+N+1-r\choose N+1}$, we get formula (\ref{Different_Equality}) for
$j=N+1$. The step of induction follows.

\item[\it Step 2.] Let us prove that

\begin{equation}
\label{bin_formula} \sum\limits_{j=i}^m{m\choose j}{{\varepsilon+j-r\choose j}}^{-1}{\varepsilon+j-1\choose
j-i}(-1)^j={m-i-r\choose m-i}{m+\varepsilon-r\choose m}^{-1}(-1)^i.
\end{equation}

By definition, put

$$F(a,b,c;x)=\sum\limits_{k=0}^{\infty}\dfrac {x^k}{k!}\dfrac{a(a+1)\dots (a+k-1)b(b+1)\dots(b+k-1)}{c(c+1)\dots(c+k-1)}.$$

This series converges, if $|x|\leq 1$ and $a+b<c$.

We have

$$\sum\limits_{j=i}^m{m\choose j}{{\varepsilon+j-r\choose j}}^{-1}{\varepsilon+j-1\choose j-i}(-x)^j=$$$$=\dfrac{m!\Gamma(\varepsilon-r+1)}{(m-i)!\Gamma(\varepsilon-r+i+1)}(-x)^iF(i+\varepsilon,i-m,\varepsilon+i-r+1;x).$$

Let us prove that

$$\dfrac{m!\Gamma(\varepsilon-r+1)}{(m-i)!\Gamma(\varepsilon-r+i+1)}F(i+\varepsilon,i-m,\varepsilon+i-r+1;1)=$$
$$={m-i-r\choose m-i}{m+\varepsilon-r\choose m}^{-1}=\dfrac{(m-i-r)!m!\Gamma(\varepsilon-r+1)}{(m-i)!\Gamma(1-r)\Gamma(m+\varepsilon-r+1)}.$$

Or equivalently

$$F(i+\varepsilon,i-m,\varepsilon+i-r+1;1)=\dfrac{(m-i-r)!\Gamma(\varepsilon-r+i+1)}{\Gamma(1-r)\Gamma(m+\varepsilon-r+1)}.$$

This formula follows by the Gauss's theorem (see \cite[p. 282]{Wa03})

$$F(a,b,c;1)=\dfrac{\Gamma(c-a-b)\Gamma(c)}{\Gamma(c-a)\Gamma(c-b)},$$

for $a=i+\varepsilon, b=i-m$ and $c=\varepsilon+i-r+1$.

Formula (\ref{bin_formula}) is proved.

\item[\it Step 3.] From formula (\ref{Different_Equality}) it follows that

$$\sum\limits_{j=0}^m{m\choose j}(-1)^jf_{\varepsilon+j}(u)=$$

$$=\sum\limits_{j=0}^m{m\choose j}(-1)^j{{\varepsilon+j-r\choose j}}^{-1}\sum\limits_{i=0}^j{\varepsilon+j-1\choose j-i}\dfrac{u^i}{i!}f_{\varepsilon}^{(i)}(u)=$$

$$=\sum\limits_{j=0}^m\sum\limits_{i=0}^j{m\choose j}{{\varepsilon+j-r\choose j}}^{-1}{\varepsilon+j-1\choose j-i}(-1)^j\dfrac{u^i}{i!}f_{\varepsilon}^{(i)}(u)=$$

$$=\sum\limits_{i=0}^m\sum\limits_{j=i}^m{m\choose j}{{\varepsilon+j-r\choose j}}^{-1}{\varepsilon+j-1\choose j-i}(-1)^j\dfrac{u^i}{i!}f_{\varepsilon}^{(i)}(u)=$$

$$=\sum\limits_{i=0}^m\dfrac{u^i}{i!}f_{\varepsilon}^{(i)}(u)\sum\limits_{j=i}^m{m\choose j}{{\varepsilon+j-r\choose j}}^{-1}{\varepsilon+j-1\choose j-i}(-1)^j.$$

Combining this with formula (\ref{bin_formula}), we obtain

\begin{equation}
\label{Simple_Formula}
{m+\varepsilon-r\choose m}^{-1}\sum\limits_{i=0}^m\dfrac{u^i}{i!}f_{\varepsilon}^{(i)}(u)(-1)^i{m-i-r\choose m-i}.
\end{equation}

\item[\it Step 4.] In this step we need

\begin{lemma}
\label{Lemma_about_fluxion}
Let $u\ge 0$ and $\varepsilon\in(-1;-1/2)$ be real and $n>0$ be integer. We have

$$\lim\limits_{m\to\infty}m^n\dfrac{f_{\varepsilon}^{(m)}(u)u^m}{u(1+\varepsilon)m!}=0.$$

For each real $u_0>0$ the limit converges uniformly for $u\in[0;u_0]$ and $\varepsilon \in(-1;-1/2)$.

\end{lemma}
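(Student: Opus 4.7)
The plan is to obtain an explicit integral representation of $f_\varepsilon^{(m)}(u)$ by differentiation under the integral sign followed by a Fubini rearrangement, then extract sub-exponential decay by splitting the resulting integral. Starting from the formula $f_\varepsilon'(u)={\varepsilon\choose r}\dfrac{1}{\Gamma(\varepsilon+1)}\int_0^\infty\dfrac{x^\varepsilon e^{-x}}{xu+1}\,dx$ derived in Step 1 of the proof of Lemma \ref{Main_Lemma}, I would differentiate $m-1$ more times under the integral using $\dfrac{d^{m-1}}{du^{m-1}}\dfrac{1}{xu+1}=\dfrac{(-1)^{m-1}(m-1)!\,x^{m-1}}{(xu+1)^m}$, obtaining
$$f_\varepsilon^{(m)}(u)={\varepsilon\choose r}\dfrac{(-1)^{m-1}(m-1)!}{\Gamma(\varepsilon+1)}\int_0^\infty\dfrac{x^{\varepsilon+m-1}e^{-x}}{(xu+1)^m}\,dx.$$
Substituting the identity $(xu+1)^{-m}=\dfrac{1}{(m-1)!}\int_0^\infty t^{m-1}e^{-t(xu+1)}\,dt$ and exchanging the order of integration (Fubini) to evaluate the $x$-integral as a gamma function yields the more convenient form
$$f_\varepsilon^{(m)}(u)={\varepsilon\choose r}\dfrac{(-1)^{m-1}\Gamma(\varepsilon+m)}{\Gamma(\varepsilon+1)}\int_0^\infty\dfrac{t^{m-1}e^{-t}}{(1+ut)^{\varepsilon+m}}\,dt.$$

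Multiplying by $\dfrac{u^{m-1}}{(1+\varepsilon)m!}$ and using $(1+\varepsilon)\Gamma(\varepsilon+1)=\Gamma(\varepsilon+2)$, the quantity in the lemma equals
$${\varepsilon\choose r}\dfrac{(-1)^{m-1}\Gamma(\varepsilon+m)}{\Gamma(\varepsilon+2)\,m!}\int_0^\infty\dfrac{(ut)^{m-1}e^{-t}}{(1+ut)^{\varepsilon+m}}\,dt.$$
The prefactor is uniformly bounded in $\varepsilon\in(-1,-1/2)$: $\left|{\varepsilon\choose r}\right|$ and $1/\Gamma(\varepsilon+2)$ are continuous (hence bounded) on $[-1,-1/2]$, and the telescoping product $\Gamma(\varepsilon+m)/\Gamma(\varepsilon+2)=(\varepsilon+m-1)(\varepsilon+m-2)\cdots(\varepsilon+2)$ is at most $(m-1)!$, so the whole prefactor is $O(1/m)$ uniformly.

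It remains to show that the integral decays faster than any polynomial in $m$, uniformly in $u\in[0,u_0]$ and $\varepsilon\in(-1,-1/2)$. The elementary estimates $\left(\dfrac{ut}{1+ut}\right)^{m-1}\le e^{-(m-1)/(1+ut)}$ and $(1+ut)^{-(\varepsilon+1)}\le 1$ (which uses $\varepsilon+1>0$) bound the integral by $\int_0^\infty e^{-t-(m-1)/(1+ut)}\,dt$. I would split this at $t_0=\sqrt{m-1}$: on $[t_0,\infty)$ the factor $e^{-t}$ alone contributes at most $e^{-\sqrt{m-1}}$, while on $[0,t_0]$, for $u\in(0,u_0]$ and $m$ so large that $u_0\sqrt{m-1}\ge 1$, one has $1+ut\le 1+u_0\sqrt{m-1}\le 2u_0\sqrt{m-1}$, giving $e^{-(m-1)/(1+ut)}\le e^{-\sqrt{m-1}/(2u_0)}$. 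The full integral is therefore at most $2e^{-c\sqrt{m-1}}$ with $c=\min(1,1/(2u_0))$, uniformly in the parameters; the case $u=0$ is trivial for $m\ge 2$.

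Putting everything together, $\left|m^n\dfrac{f_\varepsilon^{(m)}(u)u^m}{u(1+\varepsilon)m!}\right|\le Cm^{n-1}e^{-c\sqrt{m-1}}\to 0$, which proves the lemma. The main obstacle is this last step: the naive pointwise bound $(ut/(1+ut))^{m-1}\le 1$ only yields polynomial decay of the integral, so the key is to combine the exponential estimate $e^{-(m-1)/(1+ut)}$ with the factor $e^{-t}$ through the splitting argument to extract sub-exponential decay uniform in both $u$ and $\varepsilon$.
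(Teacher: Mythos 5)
Your proof is correct and follows essentially the same route as the paper: an explicit integral representation of $f_{\varepsilon}^{(m)}(u)$ obtained by differentiating under the integral sign, followed by a split of the integral at $\sqrt{m}$ that produces a bound of order $e^{-c\sqrt{m}}$, uniform in $u\in[0,u_0]$ and $\varepsilon\in(-1,-1/2)$, which kills the polynomial factor $m^n$. The only difference is cosmetic: the paper splits the original $x$-integral directly, bounding $\left(1+\tfrac{1}{xu}\right)^{-m}$ by its value at $x=\sqrt{m}$ on $[0,\sqrt{m}]$ and by $xu$ beyond, whereas you first pass to the dual $t$-integral via the Gamma/Fubini identity and use $1-\tfrac{1}{1+ut}\le e^{-1/(1+ut)}$ before splitting; both yield the same sub-exponential decay.
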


Lemma \ref{Lemma_about_fluxion} will be proved below.

Let us consider two cases.

\begin{description}

\item[\it Case 1 : let $r$ be zero.] Let $x$ and $\theta$ be real and $0 \le x\le u,\theta\in(0;1)$. By the Taylor's theorem in the Cauchy form for the function $f_{\varepsilon}(u)$, we obtain

$$f_{\varepsilon}(x)=\sum\limits_{i=0}^m\dfrac{(x-u)^i}{i!}f_{\varepsilon}^{(i)}(u)+\dfrac{(x-u)^{m+1}(1-\theta)^m}{m!}f_{\varepsilon}^{m+1}(u+\theta(x-u)).$$

Putting in this formula $x=0$, we obtain

$$\sum\limits_{i=0}^m\dfrac{u^i}{i!}f_{\varepsilon}^{(i)}(u)(-1)^i=\dfrac{u^{m+1}(-1)^{m}(1-\theta)^m}{m!}f_{\varepsilon}^{m+1}(u(1-\theta)).$$

Hence using inequality

$${m+\varepsilon \choose m}^{-1}=\dfrac{m!}{(\varepsilon+1)\dots(\varepsilon+m)}=$$

$$=\dfrac{m}{1+\varepsilon}\left(\dfrac{m-1}{\varepsilon+m}\right)\dots\left(\dfrac{1}{\varepsilon+2}\right)<\dfrac{m}{1+\varepsilon}$$

we have

$$\left|{m+\varepsilon\choose m}^{-1}\sum\limits_{i=0}^m\dfrac{u^i}{i!}f_{\varepsilon}^{(i)}(u)(-1)^i\right|<$$
$$<m(m+1)(-1)^mu\dfrac{f_{\varepsilon}^{m+1}(u(1-\theta))(u(1-\theta))^{m+1}}{(u(1-\theta))(1+\varepsilon)(m+1)!}.$$

The left-hand side of this inequality equals expression (\ref{Simple_Formula}), but for each real $u_0>0$ the
right-hand side of this inequality tends to $0$ as $m$ tends to $\infty$ uniformly for $u\in[0;u_0]$ and
$\varepsilon\in(-1;-1/2)$ by Lemma \ref{Lemma_about_fluxion}.

\item[\it Case 2 : let $r$ be positive.] Expression (\ref{Simple_Formula}) can be rewritten as

$${m+\varepsilon-r\choose m}^{-1}\sum\limits_{i=m-r+1}^m\dfrac{u^i}{i!}f_{\varepsilon}^{(i)}(u)(-1)^i{m-i-r\choose m-i}.$$

Because for $m-i-r\geq 0$, we get ${m-i-r\choose m-i}=0$. Let $j\in [0; r-1]$ be integer. We must prove that

$$\lim\limits_{m\to \infty}{m+\varepsilon-r\choose m}^{-1}\dfrac{u^{m-j}}{(m-j)!}f_{\varepsilon}^{(m-j)}(u)(-1)^{m-j}{j-r\choose j}=0.$$

Or equivalently

$$\lim\limits_{m\to \infty}{m+j+\varepsilon-r\choose m+j}^{-1}\dfrac{u^{m}}{m!}f_{\varepsilon}^{(m)}(u)=0.$$

There exists integer number $n$ and real number $C$ such that $\left|{m+j+\varepsilon-r\choose m+j}^{-1}\right|<Cm^n$. Hence

$$\left|{m+j+\varepsilon-r\choose m+j}^{-1}\dfrac{u^{m}}{m!}f_{\varepsilon}^{(m)}(u)\right|<Cm^n\dfrac{u^{m}}{m!}f_{\varepsilon}^{(m)}(u).$$

By Lemma \ref{Lemma_about_fluxion} the right-hand of this inequality tends to $0$ as $m$ tends to $\infty$.

\end{description}
\end{description}
\end{proof}

\begin{proof}[Proof of Lemma \ref{Lemma_about_fluxion}]

For each real $u_0>0$ integral in formula (\ref{f_k}) converges uniformly for $u\in[0;u_0]$. Hence differentiating formula (\ref{f_k}) with respect to u, we get

$$f_{\varepsilon}^{(m)}(u)u^m=(-1)^{m+1}(m-1)!{\varepsilon\choose r}\dfrac 1{\Gamma(1+\varepsilon)}\int_0^{\infty}x^{\varepsilon-1}e^{-x}\dfrac{(xu)^m}{(xu+1)^m}dx.$$

Let $T=\sqrt{m}$ and $m>4$. We have

$$\int_0^{\infty}x^{\varepsilon-1}e^{-x}\dfrac{(xu)^m}{(xu+1)^m}dx=$$

\begin{equation}
\label{summa}
\int_0^Tx^{\varepsilon-1}e^{-x}\left(1+\dfrac 1{xu}\right)^{-m}dx+\int_T^{\infty}x^{\varepsilon-1}e^{-x}\left(1+\dfrac 1{xu}\right)^{-m}dx.
\end{equation}

\begin{description}
  \item[\it The first term.] For each real $x\in[0;T]$, we have
  $$\int_0^Tx^{\varepsilon-1}e^{-x}\left(1+\dfrac 1{xu}\right)^{-m}dx<u^2T^{2+\varepsilon}\left(1+\dfrac 1{Tu}\right)^{2-m}$$
  because
  $$x^{\varepsilon-1}\left(1+\dfrac 1{xu}\right)^{-m}\leq u^2T^{1+\varepsilon}\left(1+\dfrac 1{Tu}\right)^{2-m}\quad \text{and} \quad e^{-x}\leq 1.$$
  \item[\it The second term.] For each real $x\in[T;+\infty)$, we have
    $$\int_T^{\infty}x^{\varepsilon-1}e^{-x}\left(1+\dfrac 1{xu}\right)^{-m}dx<\int_T^{\infty}e^{-x}(xu)dx=ue^{-T}(T+1)$$
    because
  $$x^{\varepsilon-1}\left(1+\dfrac 1{xu}\right)^{-m+1}\dfrac {xu}{1+xu}<xu.$$

\end{description}

Hence

$$\left|m^n\dfrac{f_{\varepsilon}^{(m)}(u)u^m}{u(1+\varepsilon)m!}\right|<\dfrac{(-1)^{m+1}m^{n-1}}{\Gamma(2+\varepsilon)}{\varepsilon\choose r}\left(u\sqrt{m}^{2+\varepsilon}\left(1+\dfrac 1{\sqrt{m}u}\right)^{2-m}+e^{-\sqrt{m}}\left(\sqrt{m}+1\right)\right).$$

Clearly, for each real $u_0>0$ the expression in the right-hand sides tends to $0$ as $m$ tends to $\infty$
uniformly for $u\in[0;u_0]$ and $\varepsilon\in(-1;-1/2)$.

\end{proof}

\section{Proof of Corollary \ref{Corollary1} and Corollary \ref{Corollary2}}

In order to prove Corollary \ref{Corollary1} and Corollary \ref{Corollary2} we need

\begin{lemma}
\label{lemma_about_integral}
For each integer $n\geq 0$
\begin{equation}\label{LemmaA}
    \int_0^{\infty}\dfrac{x^n}{x+1}e^{-x}dx=(-1)^{n}\left(\sum_{j=0}^{n-1}\left(j!(-1)^{j+1}\right)+\delta\right)
\end{equation}
and

\begin{equation}\label{LemmaB}
\int\limits_0^{\infty}x^n\ln(x+1)e^{-x}dx=\sum\limits_{j=0}^n\dfrac{n!}{j!}(-1)^j\left(\sum_{i=0}^{j-1}\left(i!(-1)^{i+1}\right)+\delta\right).
\end{equation}
\end{lemma}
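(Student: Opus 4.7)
I would prove (\ref{LemmaA}) first and then derive (\ref{LemmaB}) from it by integration by parts. The key preliminary observation is $\int_0^\infty\frac{e^{-x}}{x+1}\,dx=\delta$, which follows by integrating the defining formula $\delta=\int_0^\infty\ln(x+1)e^{-x}\,dx$ by parts with $u=\ln(x+1)$ and $dv=e^{-x}\,dx$: the boundary term $[-e^{-x}\ln(x+1)]_0^\infty$ vanishes at both ends, leaving exactly $\int_0^\infty\frac{e^{-x}}{x+1}\,dx$.

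\textbf{Proving (\ref{LemmaA}).} The polynomial division identity $x^n=(x+1)\sum_{k=0}^{n-1}(-1)^{n-1-k}x^k+(-1)^n$ gives $\frac{x^n}{x+1}=\sum_{k=0}^{n-1}(-1)^{n-1-k}x^k+\frac{(-1)^n}{x+1}$. I would multiply by $e^{-x}$, integrate over $[0,\infty)$, and use $\int_0^\infty x^k e^{-x}\,dx=k!$ together with the preliminary observation to obtain $\int_0^\infty\frac{x^n}{x+1}e^{-x}\,dx=\sum_{k=0}^{n-1}(-1)^{n-1-k}k!+(-1)^n\delta$. Factoring out $(-1)^n$ and applying the sign identity $(-1)^{n-1-k}=(-1)^n(-1)^{k+1}$ converts this expression into the form stated in (\ref{LemmaA}).

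\textbf{Proving (\ref{LemmaB}).} I would integrate $\int_0^\infty x^n\ln(x+1)e^{-x}\,dx$ by parts with $u=\ln(x+1)$ and $dv=x^n e^{-x}\,dx$. The antiderivative $v(x)=-e^{-x}\sum_{k=0}^n\frac{n!}{k!}x^k$ is verified directly: differentiating and shifting the summation index makes the sum telescope to $x^n e^{-x}$. Since $u(0)=\ln 1=0$ and $v(x)$ decays exponentially at infinity, the boundary term vanishes, leaving $\int_0^\infty x^n\ln(x+1)e^{-x}\,dx=\sum_{k=0}^n\frac{n!}{k!}\int_0^\infty\frac{x^k}{x+1}e^{-x}\,dx$. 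Substituting the result of (\ref{LemmaA}) into each of these integrals produces precisely the right-hand side of (\ref{LemmaB}). The argument is essentially bookkeeping; the only places where a slip is easy are the sign identity $(-1)^{n-1-k}=(-1)^n(-1)^{k+1}$ and the index shift in the verification of the antiderivative, so those are the points I would write out most carefully.
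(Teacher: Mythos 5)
Your proposal is correct: the preliminary identity $\int_0^\infty\frac{e^{-x}}{x+1}\,dx=\delta$, the division identity $x^n=(x+1)\sum_{k=0}^{n-1}(-1)^{n-1-k}x^k+(-1)^n$, the sign manipulation $(-1)^{n-1-k}=(-1)^n(-1)^{k+1}$, the antiderivative $v(x)=-e^{-x}\sum_{k=0}^n\frac{n!}{k!}x^k$ of $x^ne^{-x}$, and the vanishing of both boundary terms all check out, and substituting (\ref{LemmaA}) into the reduced form of the second integral gives exactly the right-hand side of (\ref{LemmaB}). However, your route is genuinely different from the paper's, because the paper does not prove the lemma at all: it simply cites the tabulated formulas 3.353.5 and 4.337.5 of Gradshteyn--Ryzhik. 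What your argument buys is a short, self-contained and entirely elementary derivation (one integration by parts to identify $\delta$ with $\int_0^\infty\frac{e^{-x}}{x+1}\,dx$, polynomial division, $\int_0^\infty x^k e^{-x}\,dx=k!$, and one more integration by parts reducing (\ref{LemmaB}) to (\ref{LemmaA})), which fits the paper's stated aim of elementary proofs better than an appeal to a table of integrals; what the paper's citation buys is only brevity and an external authority for the formulas. A sanity check such as $n=1$ in (\ref{LemmaB}), where both sides equal $1$, would be a worthwhile one-line addition to guard against the sign and index slips you already flagged as the delicate points.
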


Formula (\ref{LemmaA}) can be found in \cite[f. 3.353.5]{Ry07}, formula (\ref{LemmaB}) can be found in \cite[f. 4.337.5]{Ry07}.


\begin{proof}[Proof of Corollary \ref{Corollary1}]

The formula of Theorem \ref{Main_Theorem} converges uniformly for $u\in[0;1]$. Hence differentiating the formula of Theorem \ref{Main_Theorem} respect with to $u$ and taking $u=1$, we get

$$1=\sum\limits_{m=r}^{\infty}\sum\limits_{k=r}^m{m\choose k}{k\choose r}\dfrac{(-1)^{k+r}}{k!}\int\limits_0^{\infty}\dfrac{x^ke^{-x}}{x+1}dx.$$

Series in the right-hand side of this formula converges. Hence

$$\lim\limits_{m\to\infty}\sum\limits_{k=r}^m{m\choose k}{k\choose r}\dfrac{(-1)^{k+r}}{k!}\int\limits_0^{\infty}\dfrac{x^ke^{-x}}{x+1}dx=0.$$

By formula (\ref{LemmaA}) of Lemma \ref{lemma_about_integral}, we get

$$\sum\limits_{k=r}^m{m\choose k}{k\choose r}\dfrac{(-1)^{k+r}}{k!}\int\limits_0^{\infty}\dfrac{x^ke^{-x}}{x+1}dx=$$$$=
\sum\limits_{k=r}^m{m\choose k}{k\choose r}\dfrac{(-1)^{k+r}}{k!}(-1)^{k}\left(\sum_{j=0}^{k-1}\left(j!(-1)^{j+1}\right)+\delta\right)=$$

$$(-1)^r\sum\limits_{k=r}^m\sum_{j=0}^{k-1}{m\choose k}{k\choose r}\dfrac{j!}{k!}(-1)^{j+1}+(-1)^r\sum\limits_{k=r}^m{m\choose k}{k\choose r}\dfrac1{k!}\delta=$$

$$=\dfrac{(-1)^r}{m!}\sum\limits_{k=r}^m{m\choose k}^2{k\choose r}(m-k)!\sum_{j=0}^{k-1}j!(-1)^{j+1}+\delta\dfrac{(-1)^r}{m!}\sum\limits_{k=r}^m{m\choose k}^2{k\choose r}(m-k)!.$$

Clearly, the expression

$$\sum\limits_{k=r}^m{m\choose k}{k\choose r}\dfrac1{k!}$$

tends to $\infty$ as m tends to $\infty$. Hence

$$\lim\limits_{m\to\infty}\dfrac{\sum\limits_{k=r}^m{m\choose k}^2{k\choose r}(m-k)!\sum\limits_{j=0}^{k-1}j!(-1)^{j+1}}{\sum\limits_{k=r}^m{m\choose k}^2{k\choose r}(m-k)!}=-\delta.$$
\end{proof}

\begin{proof}[Proof of Corollary \ref{Corollary2}]
Series in the right-hand side of the formula of Theorem \ref{Main_Theorem} converges. Hence

$$\lim\limits_{m\to\infty}\left({\sum\limits_{k=r}^{m} {m\choose k} {k\choose r} \dfrac{(-1)^{k+r}}{k!}\int_0^{\infty}{x^{k-1}e^{-x}\ln(xu+1)dx}}\right)=0.$$

Taking $u=1$ in this formula and using formula (\ref{LemmaB}) of Lemma \ref{lemma_about_integral}, we obtain

$$\lim_{m\to\infty}\left[\sum_{k=r}^{m}{m\choose k}{k\choose r}\dfrac{(-1)^{k+r}}{k!}\sum\limits_{j=0}^{k-1}\dfrac{(k-1)!}{j!}(-1)^j\left(\sum_{i=0}^{j-1}\left(i!(-1)^{i+1}\right)+\delta\right)\right]=0.$$

Or equivalently

$$\sum_{k=r}^{m}\sum\limits_{j=0}^{k-1}\sum_{i=0}^{j-1}{m\choose k}{k\choose r}\dfrac{i!}{kj!}(-1)^{k+r+j+i+1}+\delta\sum_{k=r}^{m}\sum\limits_{j=0}^{k-1}{m\choose k}{k\choose r}\dfrac{(-1)^{k+r+j}}{kj!}.$$

We must prove that the expression

$$A_m(r):=r(-1)^{r+1}\sum_{k=r}^{m}\sum\limits_{j=0}^{k-1}{m\choose k}{k\choose r}\dfrac{(-1)^{k+r+j}}{kj!}=\sum_{j=1}^{m}\dfrac{(-1)^{j}}{(j-1)!}\sum\limits_{k=j}^{m}{m\choose k}{k-1\choose r-1}(-1)^i$$

tends to $+\infty$ as $m$ tends to $\infty$. We claim that $A_m(r+2)-A_m(r)$ tends to $+\infty$ as m tends to $\infty$. We have

$$A_m(r)+A_m(r+1)=\sum_{j=1}^{m}\dfrac{(-1)^{j}}{(j-1)!}\sum\limits_{k=j}^{m}{m\choose k}\left({k-1\choose r-1}+{k-1\choose r}\right)(-1)^k=$$
$$=\sum_{j=1}^{m}\dfrac{(-1)^{j}}{(j-1)!}\sum\limits_{k=j}^{m}{m\choose k}{k\choose r}(-1)^k=\sum_{j=1}^m{m\choose j}{j\choose r}\dfrac{j-r}{m-r}\dfrac 1{(j-1)!}$$

because
\begin{equation}\label{BinFormula2}
    \sum\limits_{k=j}^{m}{m\choose k}{k\choose r}(-1)^k={m\choose j}{j\choose r}\dfrac{j-r}{m-r}(-1)^j.
\end{equation}

This formula will be proved below.

Hence for $m>r+1$, we obtain

$$A_m(r+2)-A_m(r)=(A_m(r+1)+A_m(r+2))-(A_m(r)+A_m(r+1))=$$

$$\sum\limits_{j=1}^m{m\choose j}{j \choose r}\dfrac {j-r}{(j-1)!}\left(\dfrac{j-r-1}{(m-r-1)(r+1)}-\dfrac 1{m-r}\right)>$$

$$>\sum\limits_{j=r+1}^{2r+3}{m\choose j}{j \choose r}\dfrac {j-r}{(j-1)!}\left(\dfrac{j-r-1}{(m-r-1)(r+1)}-\dfrac 1{m-r}\right)=\dfrac{P(m)}{m-r-1}$$

Here $P(m)$ is a polynomial, $\deg(P)\ge 2r+2\ge 2$. Hence the right-hand sides tends to $+\infty$ as m tends to $\infty$. But $A_m(0)=0$ and

$$A_m(1)=A_m(0)+A_m(1)=\sum\limits_{j=1}^m{m-1\choose j-1}\dfrac 1{(j-1)!}.$$

Hence $A_m(r)$ tends to $+\infty$ as m tends to $\infty$ for each positive integer $r$.

Let us prove formula \ref{BinFormula2}. We have

$$\sum\limits_{k=j}^{m}{m\choose k}{k\choose r}(-1)^k=(-x)^j\dfrac{m!}{(m-j)!r!(j-r)!}F(1,j-m,1+j-r;x)=$$
$$=(-x)^j{m\choose j}{j\choose r}F(1,j-m,1+j-r;x).$$

Hence we must prove that

$$F(1,j-m,1+j-r;1)=\dfrac{j-r}{m-r}.$$

This formula follows by the GaussТs theorem

$$F(a,b,c;1)=\dfrac{\Gamma(c-a-b)\Gamma(c)}{\Gamma(c-a)\Gamma(c-b)}$$

for $a=1,b=j-m,c=1+j-r$.

\end{proof}

{\bf Acknowledgment.} The author is grateful A. Skopenkov for useful comments on this paper and
references.



\it

Vasily Bolbachan, AESC MSU

e-mail address: \url{ys93@bk.ru}.

\printindex

\end{document}